\newcommand{\bprop} {\begin{proposition}}
\newcommand{\eprop} {\end{proposition}}
\newcommand{\btheo} {\begin{theorem}}
\newcommand{\etheo} {\end{theorem}}
\newcommand{\blem} {\begin{lemma}}
\newcommand{\elem} {\end{lemma}}
\newcommand{\bcor} {\begin{corollary}}
\newcommand{\ecor} {\end{corollary}}
\newcommand{\Be}{\begin{equation}}
\newcommand{\Ee}{\end{equation}}
\newcommand{\Bea}{\begin{eqnarray}}
\newcommand{\Eea}{\end{eqnarray}}
\newcommand{\Bes}{\begin{equation*}}
\newcommand{\Ees}{\end{equation*}}
\newcommand{\Beas}{\begin{eqnarray*}}
\newcommand{\Eeas}{\end{eqnarray*}}
\newcommand{\Ba}{\begin{array}}
\newcommand{\Ea}{\end{array}}
\newtheorem{theorem}{T{\hskip 0pt\footnotesize\bf HEOREM}}[section]
\newtheorem{lemma}[theorem]{L{\hskip 0pt\footnotesize\bf EMMA}}
\newtheorem{proposition}[theorem]{P{\hskip 0pt\footnotesize\bf ROPOSITION}}
\newtheorem{corollary}[theorem]{C{\hskip 0pt\footnotesize\bf OROLLARY}}
\begin{document}

%\begin{frontmatter}

\title[Off-diagonal weighted estimates of the Bergman projection]{Sharp off-diagonal weighted norm estimates for the Bergman projection\\ (Not for publication)}
%\tnotetext[mytitlenote]{Fully documented templates are available in the elsarticle package %on \href{http://www.ctan.org/tex-archive/macros/latex/contrib/elsarticle}{CTAN}.}
%% Group authors per affiliation:
\author{Beno\^it F. Sehba}
\address{Department of Mathematics University of Ghana PO. Box LG 62 Legon Accra Ghana}
%\fntext[myfootnote]{Since 1880.}

%% or include affiliations in footnotes:
%\author[mymainaddress,mysecondaryaddress]{Elsevier Inc}
%\ead[url]{www.elsevier.com}

%\author[mysecondaryaddress]{Global Customer Service\corref{mycorrespondingauthor}}
%\cortext[mycorrespondingauthor]{Corresponding author}
\email{bfsehba@ug.edu.gh}

%\address[mymainaddress]{1600 John F Kennedy Boulevard, Philadelphia}
%\address[mysecondaryaddress]{360 Park Avenue South, New York}

\begin{abstract}
We prove that for $1<p\le q<\infty$, $qp\geq {p'}^2$ or $p'q'\geq q^2$, $\frac{1}{p}+\frac{1}{p'}=\frac{1}{q}+\frac{1}{q'}=1$, $$\|\omega P_\alpha(f)\|_{L^p(\mathcal{H},y^{\alpha+(2+\alpha)(\frac{q}{p}-1)}dxdy)}\le C_{p,q,\alpha}[\omega]_{B_{p,q,\alpha}}^{(\frac{1}{p'}+\frac{1}{q})\max\{1,\frac{p'}{q}\}}\|\omega f\|_{L^p(\mathcal{H},y^{\alpha}dxdy)}$$
where $P_\alpha$ is the weighted Bergman projection of the upper-half plane $\mathcal{H}$, and $$[\omega]_{B_{p,q,\alpha}}:=\sup_{I\subset \mathbb{R}}\left(\frac{1}{|I|^{2+\alpha}}\int_{Q_I}\omega^{q}dV_\alpha\right)\left(\frac{1}{|I|^{2+\alpha}}\int_{Q_I}\omega^{-p'}dV_\alpha\right)^{\frac{q}{p'}},$$
with $Q_I=\{z=x+iy\in \mathbb{C}: x\in I, 0<y<|I|\}$.
\end{abstract}

\keywords{ Bergman projection, B\'ekoll\'e-Bonami weight}
%\texttt{elsarticle.cls}\sep \LaTeX\sep Elsevier \sep template
\subjclass[2010] {Primary: 47B38, 30H20, 47B35 Secondary: 42C40, 42A61}
%\end{keyword}

%\end{frontmatter}

%\linenumbers
\maketitle

%%\section{Introduction}

%Before going ahead, let us introduce Bergman spaces of the upper-half plane.
\section{Introduction and statement of the results}
Let $\mathcal{H}$ be the upper-half plane, that is the set $\{z=x+iy\in \mathbb {C}:y>0\}$. We denote by $L_\alpha^p(\mathcal H)$ the Lebesgue space $L^p(\mathcal H, y^\alpha dxdy)$, that is the space of all functions $f$ such that 
\begin{equation}\label{eq:bergnormdef}
\|f\|_{p,\alpha}^p:=\int_{\mathcal H}|f(x+iy)|^py^\alpha dxdy<\infty.
\end{equation}

For $\alpha>-1$ and $1<p<\infty$, the weighted Bergman space $A_\alpha^p(\mathcal H)$ is the subspace of $L_\alpha^p(\mathcal H)$ consisting of analytic functions.
We recall that the Bergman space $A_{\alpha}^{2}(\mathcal H)$ ($-1< \alpha<\infty$) is a reproducing kernel Hilbert space with kernel $K_w^\alpha (z)=K^\alpha (z,w)=\frac{1}{(z-\overline {w})^{2+\alpha}}$. That is for any $f\in A_{\alpha}^{2}(\mathcal H)$, the following representation holds
\begin{equation}
f(w)=P_{\alpha}f(w)=\langle f,K_w^{\alpha}\rangle_{\alpha}=\int_{\mathcal {H}}f(z)K^{\alpha}(w,z)dV_{\alpha}(z),
\end{equation}
where for simplicity, we write $dV_\alpha(x+iy)=y^\alpha dxdy$. We write $P_\alpha^+$ for the positive Bergman operator which is defined by replacing $K_w^{\alpha}$ by $|K_w^{\alpha}|$ in the definition of $P_\alpha$. Note that the boundedness of $P_\alpha^+$ implies the boundedness of $P_\alpha$.
\vskip .3cm

Let $I$ be an interval in $\mathbb{R}$, we denote by $Q_I$ the set $$Q_I=\{z=x+iy\in \mathbb{C}: x\in I, 0<y<|I|\}.$$ Let $\omega$ be a positive locally integrable function defined on $\mathcal H$, $1<p<\infty$, $pp'=p+p'$,  and $\alpha>-1$. We say $\omega$ is a B\'ekoll\'e-Bonami weight (or $\omega$ belongs to the class $B_{p,\alpha}(\mathcal H)$) if the following quantity denoted $[\omega]_{B_{p,\alpha}}$, is finite
$$\sup_{I\subset \mathbb R,\,\,\, I\,\,\,\textrm{interval}}\left(\frac{1}{|I|^{2+\alpha}}\int_{Q_I}\omega(z)dV_\alpha(z)\right)\left(\frac{1}{|I|^{2+\alpha}}\int_{Q_I}\omega(z)^{1-p'}dV_\alpha(z)\right)^{p-1}.$$

%Observe that if we put $\sigma=\omega^{1-q}$, and use the notaions $|Q_I|_{\omega,\alpha}=\int_{Q_I}\omega dV_\alpha$ and $|Q_I|_{\alpha}=|Q_I|_{1,\alpha}$, then
%$$[\omega]_{B_{p,\alpha}}=\sup_{I\subset \mathbb R}\frac{|Q_I|_{\omega,\alpha}|Q_I|_{\sigma,\alpha}^{p-1}}{|Q_I|_{\alpha}^p}.$$
\vskip .3cm
D. B\'ekoll\'e and A. Bonami proved in \cite{Bek,BB} that the Bergman projection is bounded on $L^p(\mathcal{H}, \omega dV_\alpha)$ if and only if the weight $\omega$ is in the class $B_{p,\alpha}(\mathcal H)$.  We are interested in this note in the off-diagonal version of their result. For this, it seems natural to first understand the classical situation, that is when only powers of the distance to the boundary are involved. We focus only on the positive operator and consider at this stage the following more general family
$$T^{+}f(z)=\int_{\mathcal{H}}\frac{f(w)}{|z-\bar{w}|^{1+b}}(\Im w)^a dV(w).$$
We have the following result.
\begin{theorem}\label{thm:offmain}

Suppose $\alpha,\beta>-1$, $1< p\le q<\infty$. Then the
following conditions are equivalent:

\begin{itemize}
\item[(a)]

The operator $T^+$ is bounded from
$L_{\alpha}^{p}(\mathcal{H})$ into $L_{\beta}^{q}(\mathcal{H})$.

\item[(b)]

The parameters satisfy 
\begin{equation}\label{eq:relationalphabetagamma}
b =a+1-\frac{\alpha+2}{p}+\frac{\beta+2}{q}
\end{equation} 
and
\begin{equation}\label{eq:condineq1}
\alpha+1<p(a+1).
\end{equation}
\end{itemize}
\end{theorem}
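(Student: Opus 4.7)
\emph{Necessity.} The relation $b = a+1-(\alpha+2)/p+(\beta+2)/q$ will follow from a scaling argument. Since $\mathcal H$ is invariant under $w\mapsto tw$ for $t>0$, setting $f_t(w):=f(tw)$ and making direct changes of variables in the defining integrals gives
\begin{equation*}
\|T^+ f_t\|_{L^q_\beta} = t^{(b-a-1)-(\beta+2)/q}\,\|T^+ f\|_{L^q_\beta},\qquad \|f_t\|_{L^p_\alpha} = t^{-(\alpha+2)/p}\,\|f\|_{L^p_\alpha}.
\end{equation*}
For the operator inequality to hold uniformly in $t>0$, the two scaling exponents must agree, which is exactly the first condition in (b). For the strict inequality $\alpha+1<p(a+1)$, I would test $T^+$ on $f_s(w) = (\Im w)^{s}\mathbf 1_{\{|w|<1\}}(w)$: membership in $L^p_\alpha$ forces $s > -(\alpha+1)/p$, while for $|z|$ large $|z-\bar w|\asymp|z|$ uniformly on the support, so $T^+ f_s(z) \asymp |z|^{-(1+b)}\int_{|w|<1}(\Im w)^{s+a}\,dV(w)$, and this inner integral is finite only when $s+a>-1$. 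If $\alpha+1\ge p(a+1)$ the two constraints on $s$ become incompatible (a logarithmic refinement of $f_s$ handles the equality case), and one obtains $f\in L^p_\alpha$ with $T^+ f\equiv +\infty$, contradicting (a).

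\emph{Sufficiency.} I plan a H\"older--Forelli--Rudin--Minkowski argument. The engine is the upper half-plane identity
\begin{equation*}
\int_{\mathcal H}\frac{(\Im w)^{c}}{|z-\bar w|^{d}}\,dV(w) \;=\; C_{c,d}\,(\Im z)^{c+2-d},\qquad c>-1,\ d>c+2.
\end{equation*}
Split the weight and kernel exponents as $a = a_1+a_2$ and $1+b = d_1+d_2$, and apply H\"older with exponents $p'$ and $p$ to the defining integral of $T^+$ to obtain
\begin{equation*}
T^+f(z) \le \left(\int_{\mathcal H}\frac{(\Im w)^{a_1 p'}}{|z-\bar w|^{d_1 p'}}\,dV(w)\right)^{1/p'}\left(\int_{\mathcal H}\frac{|f(w)|^{p}(\Im w)^{a_2 p}}{|z-\bar w|^{d_2 p}}\,dV(w)\right)^{1/p}.
\end{equation*}
The first factor collapses to a power of $\Im z$ by Forelli--Rudin. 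Raising everything to the $q$-th power against $(\Im z)^{\beta}dV(z)$, using $q/p\ge 1$ to invoke Minkowski's integral inequality to interchange the order of integration, and then applying Forelli--Rudin a second time in the $z$-variable, reduces the estimate to a single integral against $|f(w)|^p$ times a pure power of $\Im w$. A short bookkeeping check shows that the condition $b = a+1-(\alpha+2)/p+(\beta+2)/q$ makes that exponent equal exactly $\alpha$, so the estimate closes to $\|T^+f\|_{L^q_\beta}\le C\|f\|_{L^p_\alpha}$.

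\emph{Main obstacle.} The real work is choosing $a_1,a_2,d_1,d_2$ so that both applications of Forelli--Rudin are legitimate, i.e., $a_1 p'>-1$ and $d_1 p' > a_1 p' + 2$ for the first integral, together with the analogous pair of conditions produced after the Minkowski step. Given $\beta>-1$, the two conditions on $d_1$ reduce to a non-empty open interval once $a_1$ is fixed; and the two conditions on $a_1$ reduce, after using the constraint on $b$, to non-emptiness of the interval $(-1/p',\,a-\alpha/p)$. An elementary computation (using $p/p' = p-1$) shows this last interval is non-empty precisely when $\alpha+1<p(a+1)$. Thus the second condition in (b) is exactly what makes the Schur-type splitting feasible, and the obstruction is algebraic: identifying the right $(a_1,d_1)$-region and verifying that all four Forelli--Rudin integrability conditions hold there simultaneously.
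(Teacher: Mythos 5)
Your proposal is correct, and on the sufficiency side it takes a genuinely different (though cognate) route from the paper. The paper proves sufficiency by invoking Okikiolu's off-diagonal Schur test: it splits the kernel via an exponent $t\in(0,1)$, tests against the power functions $\varphi_1(w)=(\Im w)^{-s}$, $\varphi_2(z)=(\Im z)^{-r}$, and reduces everything to finding $(r,s,t)$ satisfying a chain of strict inequalities whose feasibility is exactly \eqref{eq:relationalphabetagamma} and \eqref{eq:condineq1}. Your H\"older--Forelli--Rudin--Minkowski argument performs the same kernel splitting (your $(a_1,d_1)$ plays the role of the paper's $(s,t)$ up to an affine change of parameters) but replaces the citation of Okikiolu's lemma by a direct use of Minkowski's integral inequality with exponent $q/p\ge 1$ -- which is precisely where the standing hypothesis $p\le q$ enters. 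I checked your bookkeeping: the final exponent on $\Im w$ equals $\alpha$ exactly when \eqref{eq:relationalphabetagamma} holds, and the admissible interval for $a_1$ is indeed $(-1/p',\,a-\alpha/p)$, nonempty iff $\alpha+1<p(a+1)$, while the interval for $d_1$ is nonempty because $\beta>-1$; so the splitting closes. What each approach buys: the paper's is shorter because the interpolation-type work is outsourced to Okikiolu's lemma; yours is self-contained modulo the Forelli--Rudin identity (the paper's Lemma \ref{lem:integkernel}). On the necessity side the paper simply defers to \cite{BanSeh}, whereas you supply an actual argument -- the dilation-invariance computation forcing \eqref{eq:relationalphabetagamma} and the test functions $(\Im w)^{s}\mathbf 1_{\{|w|<1\}}$ (with the logarithmic modification at the endpoint $s=-(a+1)$) forcing \eqref{eq:condineq1}; both computations are correct, and the logarithmic refinement is genuinely needed since the pure powers only rule out the strict inequality $\alpha+1>p(a+1)$.
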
 
The proof of the necessary part in the above theorem follows as in \cite{BanSeh}, the proof of the sufficient part can be obtained using the off-diagonal Schur test due to O. Okikiolu \cite{Oki}. As a  particular case of the above result, we have the following.
\begin{corollary}\label{cor:offmain}
Let $1<p\le q<\infty$, and $\alpha>-1$. Then the positive Bergman projection $P_\alpha^+$ is bounded from 
$L^p(\mathcal{H},y^\alpha dxdy)$ into\\ $L^q(\mathcal{H},y^{\alpha+(2+\alpha)(\frac{q}{p}-1)}dxdy)$.
\end{corollary}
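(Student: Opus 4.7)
The plan is to read Corollary \ref{cor:offmain} as a direct specialization of Theorem \ref{thm:offmain}. The positive Bergman projection can be written as
\begin{equation*}
P_\alpha^+ f(z) = \int_{\mathcal{H}} \frac{f(w)}{|z-\bar w|^{2+\alpha}}(\Im w)^\alpha\,dV(w),
\end{equation*}
so it coincides with the operator $T^+$ in Theorem \ref{thm:offmain} upon choosing $a = \alpha$ and $1+b = 2+\alpha$, i.e.\ $b = 1+\alpha$. The target space $L^q_\beta(\mathcal{H})$ in the corollary corresponds to $\beta = \alpha + (2+\alpha)(q/p - 1)$, which simplifies to $\beta = (2+\alpha)q/p - 2$.

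The first step is to verify the parameter identity (\ref{eq:relationalphabetagamma}). Plugging in,
\begin{equation*}
a + 1 - \frac{\alpha+2}{p} + \frac{\beta+2}{q} = \alpha + 1 - \frac{\alpha+2}{p} + \frac{(2+\alpha)/p \cdot q}{q} = \alpha + 1 = b,
\end{equation*}
so the balance condition holds automatically. Next, I would check (\ref{eq:condineq1}): since $\alpha > -1$ we have $\alpha + 1 > 0$, and $p > 1$ forces $p(a+1) = p(\alpha+1) > \alpha + 1$, so this inequality is free of any further restriction.

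Once both hypotheses of Theorem \ref{thm:offmain} are verified, the boundedness of $P_\alpha^+ : L^p_\alpha(\mathcal{H}) \to L^q_\beta(\mathcal{H})$ is immediate. There is no real obstacle here, since the corollary is a direct substitution; the only thing that needs to be checked is that the specific weight exponent $\alpha + (2+\alpha)(q/p-1)$ in the target Lebesgue space is consistent with the balance equation (\ref{eq:relationalphabetagamma}), which is the little algebraic identity carried out above.
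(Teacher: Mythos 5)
Your proof is correct and is exactly the route the paper intends: Corollary \ref{cor:offmain} is presented as an immediate specialization of Theorem \ref{thm:offmain}, and your choices $a=\alpha$, $b=1+\alpha$, $\beta=(2+\alpha)\frac{q}{p}-2$ verify both \eqref{eq:relationalphabetagamma} and \eqref{eq:condineq1}. The only detail left implicit is the standing hypothesis $\beta>-1$ of Theorem \ref{thm:offmain}, which holds trivially because $q\ge p$ gives $\beta\ge\alpha>-1$.
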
 

It follows from the last result that the right question on the off-diagonal version of the B\'ekoll\'e-Bonami result is the following: for which pairs of weights $\omega,\nu$ the operator $P_\alpha^+$ is bounded from  $L^p(\mathcal{H},\omega(x+iy) y^\alpha dxdy)$ into\\ $L^q(\mathcal{H},\nu(x+iy) y^{\alpha+(2+\alpha)(\frac{q}{p}-1)}dxdy)$?
\vskip .3cm
Before giving our answer to the above question, let us remark that quite recently, S. Pott and M. C. Reguera \cite{PR} have obtained the sharp bound (in the case $p=q$) of the Bergman projection in terms of the characteristic $[\omega]_{B_{p,\alpha}}$. More precisely, they proved the following. 

\begin{theorem}[S. Pott and M. C. Reguera \cite{PR}]\label{thm:pott}
Let $1<p<\infty$, and $-1< \alpha<\infty$. Suppose that $\omega\in B_{p,\alpha}(\mathcal H) $. Then
$P_\alpha^+$ is bounded on $L^p(\omega dV_\alpha)$.
Moreover, 
\Be\label{eq:bergnormestim}\|P_\alpha^+\|_{L^p(\omega dV_\alpha)\rightarrow L^p(\omega dV_\alpha)}\le C(p)[\omega]_{B_{p,\alpha}}^{\max\{1,\frac{p'}{p}\}}
\Ee
and the above estimate is sharp.
\end{theorem}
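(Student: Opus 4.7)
The plan is to follow the modern program for sharp weighted bounds on Calder\'on--Zygmund type operators, adapted to the Bergman setting: replace $P_\alpha^+$ by a finite sum of positive dyadic (tree) operators, then estimate each one on $L^p(\omega\,dV_\alpha)$ via Sawyer-type testing together with the $B_{p,\alpha}$ condition.

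First, I would discretize the kernel. Using a finite collection $\mathcal D_1,\dots,\mathcal D_N$ of shifted dyadic grids on $\mathbb R$ (the analogue of the three-lattices trick), and the fact that $|K^\alpha(z,w)|\asymp |I|^{-(2+\alpha)}$ when $z$ and $w$ both lie in a Carleson square $Q_I$ at the scale of their pseudohyperbolic separation, one obtains a pointwise majorization
$$P_\alpha^+ f(z)\;\lesssim\;\sum_{k=1}^N\sum_{I\in\mathcal D_k}\frac{\chi_{T(I)}(z)}{|I|^{2+\alpha}}\int_{Q_I} f\,dV_\alpha,$$
where $T(I)=\{x+iy:x\in I,\ |I|/2\le y<|I|\}$ is the top half of $Q_I$. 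This reduces the problem to establishing the $L^p(\omega\,dV_\alpha)$-boundedness of a single positive dyadic tree operator $T^{\mathcal D}$, with norm control linear in the quantity that will eventually become $[\omega]_{B_{p,\alpha}}^{\max\{1,p'/p\}}$.

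Next, I would analyze $T^{\mathcal D}$ by duality and testing. Setting $\sigma=\omega^{1-p'}$, boundedness of $T^{\mathcal D}$ on $L^p(\omega\,dV_\alpha)$ reduces, via the Sawyer two-weight testing theorem for positive dyadic operators on trees, to the conditions
$$\|T^{\mathcal D}(\chi_{Q_J}\sigma)\|_{L^p(\omega\,dV_\alpha)}\lesssim \mathcal T\,\sigma(Q_J)^{1/p},\qquad \|(T^{\mathcal D})^*(\chi_{Q_J}\omega)\|_{L^{p'}(\sigma\,dV_\alpha)}\lesssim \mathcal T\,\omega(Q_J)^{1/p'}$$
for all intervals $J\subset\mathbb R$. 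Expanding the left-hand $L^p$ norm over the tree, swapping the order of summation, and using the geometric decay along nested Carleson squares, the testing constants are controlled by a Carleson embedding whose norm is in turn bounded by $[\omega]_{B_{p,\alpha}}$ to an appropriate power.

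The heart of the matter, and the main obstacle, is producing the \emph{sharp} exponent $\max\{1,p'/p\}$ rather than a merely finite power. At $p=2$ the operator $T^{\mathcal D}$ is self-adjoint with respect to $dV_\alpha$, so the two testing conditions coincide and yield the linear bound $[\omega]_{B_{2,\alpha}}^1$ immediately. For $p\ne 2$, a naive Schur or H\"older argument gives $[\omega]_{B_{p,\alpha}}^{p'/p}$ for all $p$, which is the correct exponent when $p<2$ but too large when $p>2$. To recover the exponent $1$ throughout the range $p\ge 2$ one must exploit the tree structure more carefully: this can be done either via a Bellman function estimate on the tree, or equivalently by establishing a sparse-type domination of $T^{\mathcal D}$ by principal averages over pairwise disjoint top halves. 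In either case, the lacunarity of the nested Carleson squares is what allows the surplus factor $[\omega]_{B_{p,\alpha}}^{p'/p-1}$ to be summed as a geometric series and absorbed, delivering the claimed sharp bound.
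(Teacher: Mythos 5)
First, a point of orientation: the paper does not prove this theorem directly --- it is quoted from Pott--Reguera \cite{PR} --- but it does prove a strict generalization (Theorem \ref{thm:Sbetapq} with $a=0$, $p=q$), so the relevant comparison is with the proof in Section 2. Your first step, pointwise domination of $P_\alpha^+$ by finitely many positive dyadic box operators over shifted grids, is exactly the paper's Proposition \ref{prop:compareP+Pbeta} and is fine. Where you diverge is in how the dyadic operator is handled: the paper does \emph{not} go through Sawyer testing, Carleson embedding, Bellman functions, or sparse domination. It estimates the bilinear form $\langle Q_\alpha^\beta(\sigma f),g\omega^p\rangle_\alpha$ directly: one extracts a single full power of the characteristic from the product of averages using the $B_{p,\alpha}$ condition on each $Q_I$, passes from the squares $Q_I$ to their pairwise disjoint top halves $T_I$, applies H\"older over this tiling, and then invokes the \emph{universal} boundedness of the dyadic maximal function with respect to the measures $\sigma\,dV_\alpha$ and $\omega\,dV_\alpha$ (Proposition \ref{prop:fractmax}), whose norm is independent of the weight. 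That last ingredient is precisely what produces the exponent $1$ for $p\ge 2$, with the case $p<2$ then following by self-adjointness and duality, which is where $\max\{1,p'/p\}$ comes from.

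The genuine gap in your proposal is that the crux --- obtaining the sharp exponent --- is never actually carried out. You correctly identify it as ``the heart of the matter,'' but then defer it to ``a Bellman function estimate'' or ``sparse-type domination'' without executing either, and the heuristic you offer is not right: for $p>2$ one has $p'/p<1$, so a bound of the form $[\omega]^{p'/p}$ would be \emph{smaller} than $[\omega]^{1}$ (not ``too large''), and a ``surplus factor $[\omega]^{p'/p-1}$'' has negative exponent, so there is nothing to ``absorb by geometric series''; indeed no known proof obtains the sharp exponent by summing a geometric series in the characteristic. Relatedly, at $p=2$ the coincidence of the two testing conditions does not ``immediately'' give the linear bound --- one still must verify the testing condition $\|T^{\mathcal D}(\chi_{Q_J}\sigma)\|_{L^2(\omega\,dV_\alpha)}\lesssim [\omega]_{B_{2,\alpha}}\,\sigma(Q_J)^{1/2}$, which is a genuine computation. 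The testing route can be made to work (it is close in spirit to what \cite{PR} do), but as written your argument stops exactly where the proof has to begin. The fix is either to verify the testing constants quantitatively in terms of $[\omega]_{B_{p,\alpha}}$, or, more simply, to adopt the paper's route: extract one power of the characteristic and let the weighted maximal function theorem do the rest.
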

Let $\omega$ be a weight function and let $1<p\le q<\infty$. We say $\omega$ belongs to the class $B_{p,q,\alpha}$ if the following quantity is finite
$$[\omega]_{B_{p,q,\alpha}}:=\sup_{I\subset \mathbb{R}}\left(\frac{1}{|I|^{2+\alpha}}\int_{Q_I}\omega^{q}dV_\alpha\right)\left(\frac{1}{|I|^{2+\alpha}}\int_{Q_I}\omega^{-p'}dV_\alpha\right)^{\frac{q}{p'}}.$$
The class $B_{p,q,\alpha}$ is the analogue of the class introduced by B. Muckenhoupt and R. Wheeden in \cite{MuckWheed} and which is the range of weights for which the fractional operator (Riesz potential) is bounded.
\vskip .2cm
Let us state our one weight answer to the above question.
\begin{theorem}\label{thm:Bergpq}
Let $1<p\leq q<\infty$, so that $qp\geq {p'}^2$ or $p'q'\geq q^2$, and let $\alpha>-1$. Then $P_\alpha^+$ is bounded from $L^p(\mathcal{H}, \omega^pdV_\alpha)$ into $L^q(\mathcal{H}, \omega^qdV_{\alpha+(2+\alpha)(\frac{q}{p}-1)})$ if and only $\omega\in B_{p,q,\alpha}(\mathcal{H})$. Moreover,
$$\|\omega P_\alpha^+(f)\|_{L^p(\mathcal{H},y^{\alpha+(2+\alpha)(\frac{q}{p}-1)}dxdy)}\le C_{p,q,\alpha}[\omega]_{B_{p,q}}^{(\frac{1}{p'}+\frac{1}{q})\max\{1,\frac{p'}{q}\}}\|\omega f\|_{L^p(\mathcal{H},y^{\alpha}dxdy)},$$
and the power of $[\omega]_{B_{p,q,\alpha}}$ in the above inequality is sharp.

\end{theorem}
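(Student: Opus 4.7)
The plan is to combine a sparse-domination argument for $P_\alpha^+$ in the spirit of Pott and Reguera \cite{PR} with a sharp off-diagonal estimate for the resulting sparse form, adapting the Muckenhoupt--Wheeden philosophy \cite{MuckWheed} of fractional weighted bounds. Set $\beta := \alpha+(2+\alpha)(q/p-1)$; then every Carleson tent satisfies $|Q_I|_\beta \asymp |Q_I|_\alpha^{q/p}$, which plays the role of the fractional scaling underlying the Euclidean $A_{p,q}$-theory.

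\emph{Step 1 (sparse domination).} Using a three-shifted family of dyadic Carleson tents in $\mathcal H$ and the standard stopping-time construction of \cite{PR}, one produces, for each $f\ge 0$, a sparse family $\mathcal S$ of tents such that
$$ P_\alpha^+ f(z) \lesssim \sum_{Q\in \mathcal S} \langle f\rangle_{Q,\alpha}\,\mathbf 1_Q(z), $$
where $\langle f\rangle_{Q,\alpha} := |Q|_\alpha^{-1}\int_Q f\,dV_\alpha$ and sparseness means there exist pairwise disjoint $E_Q\subset Q$ with $|E_Q|_\alpha \ge c|Q|_\alpha$. This step uses only the kernel geometry and is independent of $p,q$.

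\emph{Step 2 (off-diagonal sparse estimate).} By duality, Theorem~\ref{thm:Bergpq} reduces to the bilinear bound
$$ \Lambda_{\mathcal S}(f,g) := \sum_{Q\in \mathcal S} \langle f\rangle_{Q,\alpha}\,|Q|_\beta\,\langle g\rangle_{Q,\beta} \;\lesssim\; [\omega]_{B_{p,q,\alpha}}^{\theta}\,\|f\|_{L^p(\omega^p dV_\alpha)}\,\|g\|_{L^{q'}(\omega^{-q'}dV_\beta)}, $$
with $\theta := (1/p'+1/q)\max\{1,p'/q\}$. Introducing $\sigma := \omega^{-p'}$ and $v := \omega^q$, substituting $f = \varphi\sigma$ and $g = \psi v$, and using $|Q|_\beta \asymp |Q|_\alpha^{q/p}$, one recasts $\Lambda_{\mathcal S}$ as a trilinear sparse form in the $\sigma\,dV_\alpha$- and $v\,dV_\beta$-averages of $\varphi,\psi$. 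Running a parallel corona decomposition with respect to these two measures and combining it with sparseness reduces matters to a Carleson testing sum that is directly controlled by $[\omega]_{B_{p,q,\alpha}}$; a final H\"older across the two coronas produces the power $\theta$. The two regimes $qp\ge {p'}^2$ and $p'q' \ge q^2$ correspond to whether the $\sigma$- or the $v$-corona is the binding one, and this dichotomy is precisely the source of the factor $\max\{1,p'/q\}$.

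\emph{Step 3 (sharpness and main obstacle).} The sharpness of $\theta$ is obtained by testing with power weights $\omega(x+iy) = y^\gamma$ on a fixed Carleson tent $Q_{I_0}$ and with extremals $f(z) = y^\delta\,\mathbf 1_{Q_{I_0}}(z)$, optimising $\gamma$ and $\delta$ so that $[\omega]_{B_{p,q,\alpha}}$ is forced to blow up while both sides of the inequality remain finite; this is the off-diagonal adaptation of the sharpness example in \cite{PR}. The principal difficulty lies in Step 2: since $(p,q')$ is not a conjugate pair, H\"older cannot be applied to $\Lambda_{\mathcal S}$ directly, and the fractional factor $|Q|_\alpha^{q/p-1}$ must be absorbed into the $\sigma$-side or the $v$-side according to the regime, so that exactly one power of $[\omega]_{B_{p,q,\alpha}}$ emerges, raised to the optimal exponent $\theta$.
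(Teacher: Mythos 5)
Your overall architecture (dyadic/sparse domination of the positive kernel, a bilinear estimate for the resulting form split into two regimes, duality to pass between them, and power-weight extremals for sharpness) is the same skeleton the paper uses: the paper reduces Theorem~\ref{thm:Bergpq} to Theorem~\ref{thm:Sbetapq} for $S_{\alpha,a}$ with $a=(2+\alpha)(\frac1p-\frac1q)$, dominates $S_{\alpha,a}$ by the dyadic operators $Q_{\alpha,a}^\beta$ over the two shifted grids, and notes that the full dyadic family of Carleson boxes is automatically ``sparse'' because the upper halves $T_I$ tile $\mathcal H$ with $|T_I|_\alpha\simeq|Q_I|_\alpha$. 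Up to that point your Step 1 is fine.

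The gap is in Step 2, which is the actual content of the theorem. You assert that ``a parallel corona decomposition \dots reduces matters to a Carleson testing sum that is directly controlled by $[\omega]_{B_{p,q,\alpha}}$; a final H\"older across the two coronas produces the power $\theta$,'' but nothing in the proposal shows how the exact exponent $(\frac1{p'}+\frac1q)\max\{1,\frac{p'}{q}\}$ comes out, nor where the arithmetic hypothesis $qp\ge p'^2$ (equivalently $\frac{p'}{q}\le \frac1{p'}+\frac1q$) is used. In the paper's argument this hypothesis is used at one precise and unavoidable spot: after extracting $[\omega]_{B_{p,q,\alpha}}^{1-\frac a{2+\alpha}}$ from the form, one is left with the factor $|Q_I|_{\sigma,\alpha}^{1-\frac{q}{p'}(1-\frac a{2+\alpha})}$, and only because this exponent is $\le 0$ can one replace $Q_I$ by its upper half $T_I$; the disjointness of the $T_I$ then permits H\"older with exponents $(p,p')$ and the reduction to the sharp bounds $\|\mathcal M_{\sigma,\alpha}^\beta\|_{L^p(\sigma)\to L^p(\sigma)}$ and $\|\mathcal M_{u,\alpha,a}^\beta\|_{L^{q'}(u)\to L^{p'}(u)}$ of Proposition~\ref{prop:fractmax}. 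The second regime $p'q'\ge q^2$ is then handled not by a second corona but by self-adjointness of $Q_{\alpha,a}^\beta$ and the identity $[\omega^{-1}]_{B_{q',p',\alpha}}=[\omega]_{B_{p,q,\alpha}}^{p'/q}$, which is where $\max\{1,\frac{p'}{q}\}$ comes from. Without these computations (or a genuinely worked-out corona argument replacing them) your Step 2 is a plan, not a proof. A secondary point: your sharpness example uses $\omega=y^\gamma$ localized on a tent, whereas the paper (following \cite{PR}) uses $\omega(z)=|z|^{(2+\alpha-\delta)/p'}$ and $f(z)=|z|^{\delta-(2+\alpha)}{\bf 1}_{\{|z|\le1\}}$, exploiting the far-field decay $S_{\alpha,a}f(z)\gtrsim \delta^{-1}y^a|z|^{-(2+\alpha)}$ to get the extra factor $\delta^{-1/q-1}$; you would need to verify that the boundary-localized version actually produces the full power $\theta$ rather than a smaller one.
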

We remark that the boundedness of $P_\alpha^+$ from $L^p(\mathcal{H}, \omega^pdV_\alpha)$ into $L^q(\mathcal{H}, \omega^qdV_{\alpha+(2+\alpha)(\frac{q}{p}-1)})$ is equivalent to the boundedness from $L^p(\mathcal{H}, \omega^pdV_\alpha)$ into $L^q(\mathcal{H}, \omega^qdV_\alpha)$ of the operator
$$(\Im z)^{(2+\alpha)(\frac{1}{p}-\frac{1}{q})}\int_{\mathcal {H}}f(z)|K^{\alpha}(w,z)|dV_{\alpha}(z).$$
We are then led to consider the more general question of the boundedness  from $L^p(\mathcal{H}, \omega^pdV_\alpha)$ into $L^q(\mathcal{H}, \omega^qdV_\alpha)$ of the operator
 $$S_{\alpha,a}f(z)=(\Im z)^{a}\int_{\mathcal {H}}f(z)|K^{\alpha}(w,z)|dV_{\alpha}(z)$$
where $0\leq a<2+\alpha$. We have the following result.
\begin{theorem}\label{thm:Sbetapq}
Let $\alpha>-1$, and $0\leq a<2+\alpha$. Let $1<p\leq q<\infty$, so that $\frac{1}{p}-\frac{1}{q}=\frac{a}{2+\alpha}$ and $\min\{\frac{p'}{q},\frac{q}{p'}\}\leq 1-\frac{a}{2+\alpha}$. Then $S_{\alpha,a}$ is bounded from $L^p(\mathcal{H}, \omega^pdV_\alpha)$ into $L^q(\mathcal{H}, \omega^qdV_\alpha)$ if and only if $\omega\in B_{p,q,\alpha}(\mathcal{H})$. Moreover,
$$\|S_{\alpha,a}\|_{L^p(\mathcal{H},\omega^pdV_\alpha)\rightarrow L^q(\mathcal{H},\omega^qdV_\alpha)}\le C_{p,q,\alpha}[\omega]_{B_{p,q,\alpha}}^{(1-\frac{a}{2+\alpha})\max\{1,\frac{p'}{q}\}},$$
and the power of $[\omega]_{B_{p,q,\alpha}}$ in the above inequality is sharp.

\end{theorem}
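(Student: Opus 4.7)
The plan is to follow the blueprint of Lacey--Moen--P\'erez--Torres for sharp weighted fractional integral inequalities, transplanted to the Bergman tree of $\mathcal{H}$ via the dyadic methods of Pott--Reguera. First I would fix a dyadic grid $\mathcal{D}$ of intervals on $\mathbb{R}$ and, for $I\in\mathcal{D}$, let $T_I=\{x+iy\in Q_I:\ |I|/2<y\leq|I|\}$ be the top half of the Carleson tent. On $T_I$ the factor $(\Im z)^a$ is comparable to $|I|^a$, while the kernel satisfies $|K^\alpha(z,w)|\lesssim|I|^{-(2+\alpha)}$ when $z\in T_I$ and $w\in Q_I$. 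Combined with the standard trick of summing over finitely many translated dyadic grids and decomposing the kernel according to the unique top-half containing $w$, one obtains a pointwise domination
\[
S_{\alpha,a}f(z)\;\lesssim\;\sum_{I\in\mathcal{D}}\frac{|I|^a}{|I|^{2+\alpha}}\,\chi_{T_I}(z)\int_{Q_I}|f|\,dV_\alpha\;=:\;\mathcal{A}_a^{\mathcal{D}}f(z),
\]
reducing the problem to sharp bounds for the positive dyadic operator $\mathcal{A}_a^{\mathcal{D}}$.

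For the weighted estimate on $\mathcal{A}_a^{\mathcal{D}}$, set $\sigma:=\omega^{-p'}dV_\alpha$ and $\mu:=\omega^{q}dV_\alpha$. By duality the task reduces to proving
\[
\sum_{I\in\mathcal{D}}\frac{|I|^a}{|I|^{2+\alpha}}\Bigl(\int_{Q_I}h\,d\sigma\Bigr)\Bigl(\int_{T_I}g\,d\mu\Bigr)\;\lesssim\;[\omega]_{B_{p,q,\alpha}}^{\theta}\,\|h\|_{L^{p'}(\sigma)}\|g\|_{L^{q'}(\mu)},
\]
with $\theta=(1-a/(2+\alpha))\max\{1,p'/q\}$. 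I would run a stopping-time/corona decomposition on $h$ against $\sigma$ in the spirit of Lacey and P\'erez, replace the $h$-averages by their stopping values, then apply H\"older in the remaining sum. The key algebraic ingredient is the $B_{p,q,\alpha}$ inequality
\[
\mu(Q_I)\,\sigma(Q_I)^{q/p'}\;\leq\;[\omega]_{B_{p,q,\alpha}}\,|I|^{(2+\alpha)(1+q/p')},
\]
which, combined with the identity $1/p'+1/q=1-a/(2+\alpha)$ forced by the scaling hypothesis, delivers exactly the advertised exponent $\theta$. The secondary hypothesis $\min\{p'/q,q/p'\}\le 1-a/(2+\alpha)$ is precisely what allows the geometric series indexed by stopping generations to sum while respecting $\theta$; this is where the \emph{main obstacle} lies, since a cruder estimate would either diverge or sacrifice sharpness.

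Necessity follows from testing: with $f=\omega^{-p'}\chi_{Q_I}$, one has $(\Im z)^a|K^\alpha(z,w)|\gtrsim|I|^{a-(2+\alpha)}$ for $z,w\in Q_I$, so $S_{\alpha,a}f(z)\gtrsim|I|^{a-(2+\alpha)}\sigma(Q_I)$ on $Q_I$, and substitution into the asserted inequality yields $[\omega]_{B_{p,q,\alpha}}<\infty$. Sharpness of the exponent is demonstrated by a one-parameter family of power weights $\omega_\delta(x+iy)=y^{\delta}\chi_{Q_{I_0}}$, with $\delta$ approaching the critical endpoint at which $\sigma(Q_{I_0})$ diverges: a direct computation shows that $[\omega_\delta]_{B_{p,q,\alpha}}$ blows up at a polynomial rate in the distance to the endpoint, and testing $S_{\alpha,a}$ on functions $f_\tau=y^{\tau}\chi_{Q_{I_0}}$ with $\tau$ optimized produces a matching lower bound on the operator norm, thereby certifying the exponent $\theta$.
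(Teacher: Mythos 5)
Your high-level architecture --- dyadic domination, a weighted bound for the resulting positive dyadic operator, testing for necessity, power weights for sharpness --- matches the paper's, but your first and most load-bearing step is false as stated, and this is a genuine gap. The pointwise bound $S_{\alpha,a}f(z)\lesssim\sum_{I}|I|^{a-(2+\alpha)}\chi_{T_I}(z)\int_{Q_I}|f|\,dV_\alpha$ cannot hold: for fixed $z$ the only interval of a given dyadic grid with $z\in T_I$ is the one $I_z$ satisfying $\Re z\in I_z$ and $|I_z|/2<\Im z\le|I_z|$, so your right-hand side vanishes whenever $f$ is supported outside $Q_{I_z}$, while $S_{\alpha,a}f(z)>0$ for any nontrivial $f\ge0$. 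The correct domination (Proposition \ref{prop:compareP+Pbeta}, after Pott--Reguera) carries $\mathbf{1}_{Q_I}(z)$, not $\chi_{T_I}(z)$: one must retain every dyadic $I$ with $z,w\in Q_I$, using that the smallest such $I$ in one of the two grids has $|I|\simeq|z-\bar w|$ and that the tail over larger $I$ sums geometrically. Consequently the bilinear form you actually have to control is $\sum_I|I|^{a-(2+\alpha)}\bigl(\int_{Q_I}h\,d\sigma\bigr)\bigl(\int_{Q_I}g\,d\mu\bigr)$, with \emph{both} integrals over the full box; the sparse form with $\int_{T_I}g\,d\mu$ that you propose to estimate is strictly smaller, and bounding it proves nothing unless you first establish a bilinear sparse domination, which you do not supply. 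The paper needs no corona decomposition at all: it inserts the $B_{p,q,\alpha}$ condition directly into the full form, uses the hypothesis $\frac{p'}{q}\le 1-\frac{a}{2+\alpha}$ (equivalently $1-\frac{q}{p'}(1-\frac{a}{2+\alpha})\le0$) to replace $|Q_I|_{\sigma,\alpha}$ by $|T_I|_{\sigma,\alpha}$, exploits $|Q_I|_\alpha\simeq|T_I|_\alpha$ and H\"older to manufacture the disjointly supported quantities $|T_I|_{u,\alpha}^{1/p'}|T_I|_{\sigma,\alpha}^{1/p}$, applies discrete H\"older, and concludes with the sharp bound for the fractional dyadic maximal function (Proposition \ref{prop:fractmax}); the case $\frac{q}{p'}\le1-\frac{a}{2+\alpha}$ then follows by self-adjointness and duality.

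Two further problems. In your necessity argument, the lower bound $(\Im z)^a|K^\alpha(z,w)|\gtrsim|I|^{a-(2+\alpha)}$ for $z,w\in Q_I$ fails for $z$ near the real line, where $\Im z\ll|I|$ and $a>0$; it holds only for $z\in T_I$, so you would be left comparing $\mu(T_I)$ with $\mu(Q_I)$, which is not available. The paper instead passes through the pointwise inequality $\mathcal{M}_{\alpha,a}f\le S_{\alpha,a}|f|$ and tests with $f=\omega^{-p'}\mathbf{1}_{Q_I}$. Finally, your sharpness weight $\omega_\delta(x+iy)=y^\delta\chi_{Q_{I_0}}$ is inadmissible: it vanishes off $Q_{I_0}$, so $\omega_\delta^{-p'}$ is not locally integrable and $[\omega_\delta]_{B_{p,q,\alpha}}=\infty$. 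The paper's example is the global weight $\omega(z)=|z|^{(2+\alpha-\delta)/p'}$ with $f(z)=|z|^{\delta-(2+\alpha)}\mathbf{1}_{\{|z|\le1\}}$, for which $[\omega]_{B_{p,q,\alpha}}\simeq\delta^{-q/p'}$, $\|f\|_{L^p(\omega^pdV_\alpha)}\simeq\delta^{-1/p}$, and $\|S_{\alpha,a}f\|_{L^q(\omega^qdV_\alpha)}\gtrsim\delta^{-1-1/q}$, which realizes the exponent $(1-\frac{a}{2+\alpha})\frac{p'}{q}$. Your plan is repairable along these lines, but as written the central reduction and both endpoint arguments contain errors.
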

Note that the sufficent part and the norm estimate in Theorem \ref{thm:Bergpq} follow from Theorem \ref{thm:Sbetapq}. The proof of the necessary part in Theorem \ref{thm:Bergpq} follows from usual arguments (see the proof of Theorem \ref{thm:Sbetapq}). We observe that Theorem \ref{thm:Bergpq} generalizes Theorem \ref{thm:pott} as the latter just corresponds to the case $p=q$ in our result.
\vskip .3cm
As in \cite{PR}, we follow the current trend in dyadic harmonic analysis, we use dyadic models for the operator $S_{\alpha,a}$ and make use of methods similar to the ones in \cite{Laceyetal,Moen}. As said above, for the proof of Theorem \ref{thm:offmain}, we will make use of an off-diagonal Schur-type test from \cite{Oki}. The main difficulty with the latter resides in the choice of the test functions and the other parameters of the test. 
\vskip .3cm
Our presentation is as follows: Theorem \ref{thm:Sbetapq} is proved in Section 2. In Section 3, we give an example that shows that our estimates are sharp. We make a remark about a sibling of the operator $S_{\alpha,a}$ in Section 4. Finally, for completeness, we give a proof of Theorem \ref{thm:offmain} in the last section.
\vskip .3cm
As usual, given two positive quantities $A$ and $B$, the notation $A\lesssim B$ (resp. $A\gtrsim B$)  means that $A\le CB$ (resp. $B\le CA$) for some absolute positive constant $C$. The notation $A\backsimeq B$ mean that $A\lesssim B$ and $B\lesssim A$. We will use $C_p$ or $C(p)$ to say that the constant $C$ depends only on $p$.

%\section{Proof of Theorem \ref{thm:offmain}}
\section{Proof of Theorem \ref{thm:Sbetapq}}

Let us start by recalling some notions and notations. We consider the following system of dyadic grids,
$$\mathcal D^\beta:=\{2^j\left([0,1)+m+(-1)^j\beta\right):m\in \mathbb Z,\,\,\,j\in \mathbb Z \},\,\,\,\textrm{for}\,\,\,\beta\in \{0,1/3\}.$$
%For more on this system of dyadic grids and its applications, we refer to \cite{AlPottReg, HyLaPerez, HyPerez, Lerner, PR}. 
We also consider the following positive operators.
\begin{equation}\label{eq:discretoper}
Q_{\alpha,a}^\beta f:=\sum_{I\in \mathcal {D}^\beta}|I|^a\langle f,\frac{{\bf 1}_{Q_I}}{|I|^{2+\alpha}}\rangle_\alpha {\bf 1}_{Q_I}.
\end{equation}
By comparing the positive kernel $$K_\alpha^+(z,w)=\frac{1}{|z-w|^{2+\alpha}}$$ and the box-type kernel
$$K_\alpha^\beta(z,w):=\sum_{I\in \mathcal {D}^\beta}\frac{{\bf 1}_{Q_I}(z){\bf 1}_{Q_I}(w)}{|I|^{2+\alpha}},$$
one obtains the following (see \cite{PR} for the case $a=0$).
\begin{proposition}\label{prop:compareP+Pbeta}
Let $\alpha>-1$. Then there exists a constant $C>0$ such that for any $f\in L_{loc}^1{(\mathcal H,dV_\alpha)}$, $f\ge 0$, and $z\in \mathcal H$,
\begin{equation}\label{eq:compareP+beta}
S_{\alpha,a}f(z)\le C\sum_{\beta\in \{0,1/3\}}Q_{\alpha,a}^\beta f(z).
\end{equation}
\end{proposition}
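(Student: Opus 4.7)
The plan is to derive \eqref{eq:compareP+beta} from a pointwise kernel comparison: for all $z,w\in\mathcal H$,
\begin{equation*}
\frac{(\Im z)^a}{|z-\bar w|^{2+\alpha}} \;\le\; C\sum_{\beta\in\{0,1/3\}} \sum_{I\in\mathcal D^\beta} \frac{|I|^a\,{\bf 1}_{Q_I}(z){\bf 1}_{Q_I}(w)}{|I|^{2+\alpha}}.
\end{equation*}
Granted this pointwise inequality, I would multiply by $f(w)\,dV_\alpha(w)$ (all integrands non-negative) and apply Fubini--Tonelli to exchange sum and integral, producing the dyadic majorant $\sum_\beta Q_{\alpha,a}^\beta f(z)$ on the right-hand side.

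To establish the kernel estimate, I would start from the elementary equivalence $|z-\bar w|\backsimeq|\Re z-\Re w|+\Im z+\Im w$. Let $J\subset\mathbb R$ be the interval of length $|J|=\max\{|\Re z-\Re w|,\Im z,\Im w\}$ containing both $\Re z$ and $\Re w$; then $|J|\backsimeq|z-\bar w|$ and $\Im z,\Im w\le|J|$. Next I would invoke the one-third trick tailored to the two-grid system $\mathcal D^0\cup\mathcal D^{1/3}$ (the $(-1)^j$ in the definition of $\mathcal D^\beta$ is chosen precisely to make two shifts suffice): every interval $J\subset\mathbb R$ is contained in some dyadic interval $I\in\mathcal D^{\beta}$ for some $\beta\in\{0,1/3\}$, with $|I|\le 6|J|$. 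For such an $I$ one has $\Re z,\Re w\in I$ and $\Im z,\Im w\le|I|$, so $z,w\in Q_I$; furthermore $|I|\lesssim|z-\bar w|$, hence $(\Im z)^a\le|I|^a$ and $|I|^{-(2+\alpha)}\gtrsim|z-\bar w|^{-(2+\alpha)}$. Multiplying these bounds gives
\[
\frac{(\Im z)^a}{|z-\bar w|^{2+\alpha}}\;\lesssim\;\frac{|I|^a\,{\bf 1}_{Q_I}(z){\bf 1}_{Q_I}(w)}{|I|^{2+\alpha}},
\]
which is already a single non-negative summand on the right-hand side of the target inequality.

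The overall scheme is that of Pott--Reguera in the case $a=0$; the only new input is the trivial observation $(\Im z)^a\le|I|^a$, valid because $z\in Q_I$. The sole mild subtlety is the boundary configuration where $\Im z$ or $\Im w$ equals $|I|$ (so that $z$ or $w$ sits on $\partial Q_I$ rather than inside); this is handled by passing to the dyadic parent of $I$, which enlarges $|I|$ only by a factor of $2$ and leaves all estimates intact. I do not anticipate a genuine obstacle; the argument should be short and essentially combinatorial.
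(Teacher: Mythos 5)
Your argument is correct and is exactly the comparison the paper has in mind: the paper gives no written proof, merely asserting that the result follows "by comparing the positive kernel and the box-type kernel" and citing Pott--Reguera for $a=0$, and your kernel inequality via $|z-\bar w|\backsimeq|\Re z-\Re w|+\Im z+\Im w$, the two-grid one-third trick, and the observation $(\Im z)^a\le|I|^a$ for $z\in Q_I$ is precisely that comparison carried out for general $a\ge 0$.
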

For any weight $\omega$, and any subset $E\subset \mathcal{H}$, we write $$|E|_{\omega,\alpha}:=\int_E \omega dV_\alpha.$$
Given the dyadic grid $\mathcal D^\beta$ and a positive weight $\omega$, we define the fractional dyadic maximal function $M_{\omega,\alpha,a}^\beta $ for any $f\in L_{loc}^1(\mathcal H, dV_\alpha)$ by
$$\mathcal{M}_{\omega,\alpha,a}^\beta f=\sup_{I\in \mathcal {D}^\beta}\frac{{\bf 1}_{Q_I}}{|Q_I|_{\omega,\alpha}^{1-\frac{a}{2+\alpha}}}\int_{Q_I}|f|\omega dV_\alpha.$$
When $\omega=1$, we simply write $M_{\alpha,a}^\beta$ for the unweighted fractional maximal function.
Following for example the techniques in \cite{Moen}, one obtains the following.
\begin{proposition}\label{prop:fractmax}
If $0\leq a<2+\alpha$, $1<p<\frac{2+\alpha}{a}$ and $\frac{1}{q}=\frac{1}{p}-\frac{a}{2+\alpha}$, then 
$$\|\mathcal{M}_{\omega,\alpha,a}^\beta f\|_{L^q(\mathcal{H},\omega dV_\alpha)}\le C_{a,\alpha}\left(1+\frac{p'}{q}\right)^{1-\frac{a}{2+\alpha}}\|f\|_{L^p(\mathcal{H},\omega dV_\alpha)}.$$
\end{proposition}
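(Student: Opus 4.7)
The plan is to adapt the sharp-constant argument of Moen~\cite{Moen} for the classical fractional maximal function to the weighted dyadic tent setting on the upper half-plane. Set $\tau := 1 - \tfrac{a}{2+\alpha}$ and note the algebraic identity $\tau = \tfrac{1}{p'} + \tfrac{1}{q}$, which will be the key glue between the Hölder step and the fractional exponent.

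First I would carry out a principal-cube stopping-time construction at geometric thresholds. Fix $A > 1$ large (depending only on $\tau$); for each $k \in \mathbb{Z}$, let $\mathcal{C}_k \subset \mathcal{D}^\beta$ be the collection of maximal intervals $I$ satisfying $\int_{Q_I} |f| \omega\, dV_\alpha > A^k |Q_I|_{\omega,\alpha}^{\tau}$. The cubes $\{Q_I\}_{I \in \mathcal{C}_k}$ are pairwise disjoint and exhaust the level set $\Omega_k := \{\mathcal{M}^\beta_{\omega,\alpha,a} f > A^k\}$. Setting $E_I := Q_I \setminus \Omega_{k+1}$ for $I \in \mathcal{C}_k$, the pockets $\{E_I\}_{I \in \bigcup_k \mathcal{C}_k}$ are pairwise disjoint; the sparseness $|E_I|_{\omega,\alpha} \ge \tfrac{1}{2} |Q_I|_{\omega,\alpha}$ is extracted from the subadditivity of $t \mapsto t^\tau$ (valid since $\tau \le 1$) applied to the stopping sum $\sum_{J \in \mathcal{C}_{k+1},\, Q_J \subsetneq Q_I} |Q_J|_{\omega,\alpha}^\tau$, after choosing $A$ large enough.

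Next, I would combine a per-cube Hölder estimate with the layer-cake. For $I \in \mathcal{C}_k$, Hölder yields $A^k |Q_I|_{\omega,\alpha}^\tau < \|f \mathbf{1}_{Q_I}\|_{L^p(\omega dV_\alpha)} |Q_I|_{\omega,\alpha}^{1/p'}$; using $\tau - \tfrac{1}{p'} = \tfrac{1}{q}$ this becomes $A^{kq} |Q_I|_{\omega,\alpha} \le \|f \mathbf{1}_{Q_I}\|_{L^p(\omega dV_\alpha)}^q$. Combined with $\|\mathcal{M}^\beta_{\omega,\alpha,a} f\|_{L^q(\omega dV_\alpha)}^q \lesssim \sum_k A^{kq} |\Omega_k|_{\omega,\alpha}$ and the sparseness from Step~1, the problem reduces to controlling a sparse-type sum of the form $\sum_I \bigl(\tfrac{1}{|Q_I|_{\omega,\alpha}} \int_{Q_I} |f|^p \omega\, dV_\alpha \bigr)^{q/p} |E_I|_{\omega,\alpha}$ by $\|f\|_{L^p(\omega dV_\alpha)}^q$ with the sharp constant. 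This last step is the fractional analogue of the classical Carleson embedding and is handled by applying the weighted dyadic maximal-function bound $\|M g\|_{L^r(\omega dV_\alpha)} \le r'\|g\|_{L^r(\omega dV_\alpha)}$ at a carefully chosen fractional level $r$, together with a Hölder/rearrangement argument and the algebraic identities linking $\tau$, $p$, $q$, and $\tfrac{a}{2+\alpha}$.

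The main obstacle I anticipate is producing the sharp exponent $(1+\tfrac{p'}{q})^\tau$ rather than the weaker $(p')^\tau$ that a naive $L^p$ maximal-function bound would yield: this forces one to apply the maximal inequality at the fractional level (exploiting $q > p$ when $a > 0$) and to track the exponent $\tau$ through the resulting Doob-type constant. A secondary difficulty is the sparseness $|E_I|_{\omega,\alpha} \gtrsim |Q_I|_{\omega,\alpha}$ in the absence of a doubling hypothesis on $\omega\, dV_\alpha$; this is circumvented by the concavity of $t \mapsto t^\tau$, bypassing the usual parent-child measure comparison.
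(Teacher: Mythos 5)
The paper gives no argument for this proposition at all --- it is imported from \cite{Moen} with the remark ``following the techniques in \cite{Moen}'' --- so I am comparing your sketch against the cited argument, whose overall architecture (stopping intervals at thresholds $A^k$, disjoint pockets $E_I=Q_I\setminus\Omega_{k+1}$, a per-cube H\"older step driven by the identity $\tau=\tfrac1{p'}+\tfrac1q$, and the universal Doob bound $\|M^d_{\mu}\|_{L^r(\mu)\to L^r(\mu)}\le r'$ for $\mu=\omega\,dV_\alpha$) you have correctly identified. The gap is in your Step~1: the sparseness $|E_I|_{\omega,\alpha}\ge\tfrac12|Q_I|_{\omega,\alpha}$ is false for a general weight, and the mechanism you propose cannot deliver it. Subadditivity of $t\mapsto t^\tau$ over the stopping children $J\in\mathcal C_{k+1}$ inside $Q_I$ only yields $|Q_I\cap\Omega_{k+1}|_{\omega,\alpha}^{\tau}\le\sum_J|Q_J|_{\omega,\alpha}^{\tau}<A^{-(k+1)}\int_{Q_I}|f|\omega\,dV_\alpha$; to convert the right-hand side into $\bigl(\tfrac12|Q_I|_{\omega,\alpha}\bigr)^{\tau}$ you need the \emph{upper} bound $\int_{Q_I}|f|\omega\,dV_\alpha\lesssim A^{k}|Q_I|_{\omega,\alpha}^{\tau}$, whereas the stopping condition gives only the lower bound. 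The upper bound must come from the maximality of $I$ through its dyadic parent $\hat I$, which forces you to compare $|Q_{\hat I}|_{\omega,\alpha}$ with $|Q_I|_{\omega,\alpha}$ --- precisely the doubling you do not have; concavity does not bypass this. Indeed sparseness simply fails: whenever $Q_I$ also satisfies the level-$(k+1)$ stopping condition one has $Q_I\subset\Omega_{k+1}$ and $E_I=\emptyset$.

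The repair, which is what the cited proofs actually do, is to never use a lower bound on $|E_I|_{\omega,\alpha}$. One needs only that the sets $E_I$ are pairwise disjoint over all $(k,I)$ and that $\Omega_k\setminus\Omega_{k+1}=\bigcup_{I\in\mathcal C_k}E_I$, giving $\|\mathcal M^{\beta}_{\omega,\alpha,a}f\|_{L^q(\omega dV_\alpha)}^q\le A^q\sum_{k,I}A^{kq}|E_I|_{\omega,\alpha}$. One then bounds $A^{kq}$ by the stopping average, factors $\bigl(|Q_I|_{\omega,\alpha}^{-\tau}\int_{Q_I}|f|\omega\,dV_\alpha\bigr)^{q}|E_I|_{\omega,\alpha}$ into a piece measured by the ordinary $\mu$-average of $|f|$ on $Q_I$ and a piece carrying $|E_I|_{\omega,\alpha}$, and applies H\"older in the sum over $(k,I)$ with exponents $p'$ and $p$: the first factor is controlled by $\|M^d_{\mu}f\|_{L^p(\mu)}^{q/p'}$ via Doob's inequality (valid without doubling, constant $p'$), the second by $\|f\|_{L^p(\mu)}^{q/p}$ using only the disjointness of the $E_I$; tracking the exponents through this splitting is what produces the stated power $\tau=1-\tfrac{a}{2+\alpha}$ on $(1+p'/q)$. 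Your Step~3 already names the right tools, but the reduction feeding into it must be rebuilt along these lines rather than through a density estimate for $E_I$.
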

\begin{proof}[Proof of Theorem \ref{thm:Sbetapq}]
We start by proving the sufficient part. We recall that given $Q_I$, its upper-half is the set $$T_I:=\{x+iy\in \mathcal H: x\in I,\,\,\,\textrm{and}\,\,\,\frac{|I|}{2}<y<|I|\}.$$ It is clear that if $\mathcal D$ is a dyadic grid in $\mathbb R$, then the family $\{T_I\}_{I\in \mathcal D}$ forms a tiling of $\mathcal H$.

Now observe that to prove Theorem \ref{thm:Sbetapq}, it is enough by Proposition \ref{prop:compareP+Pbeta} to prove that the following boundedness holds (with the right estimate of the norm)
\begin{equation}\label{eq:boundednessQbeta}
Q_{\alpha,a}^{\beta}: L^p(\omega^p dV_\alpha)\rightarrow L^q(\omega^q dV_\alpha),\,\,\,\beta\in \{0,1/3\}.
\end{equation}
The latter is equivalent to the following
\begin{equation}\label{eq:boundednessQbetasigma}
Q_{\alpha,a}^{\beta}(\sigma\cdot): L^p(\sigma dV_\alpha)\rightarrow L^q(\omega^q dV_\alpha),\,\,\,\beta\in \{0,1/3\},\,\,\,\sigma=\omega^{-p'}.
\end{equation}

Let $f\in L^p(\sigma dV_\alpha)$ and $g\in L^{q'}(\omega^q dV_\alpha)$ with $f,g>0$. We aim to estimate $$\langle Q_{\alpha,a}^{\beta}(\sigma f),g\omega^q \rangle_\alpha=\int_{\mathcal H}Q_{\alpha,a}^{\beta}(\sigma f)g\omega^q dV_\alpha.$$
We start with the case $\frac{p'}{q}\leq 1-\frac{a}{2+\alpha}$. We put $u=\omega^q$, and observe that $$[\omega]_{B_{p,q,\alpha}}=\sup_{I\subset \mathbb R}\frac{|Q_I|_{u,\alpha}}{|Q_I|_\alpha}\left(\frac{|Q_I|_{\sigma,\alpha}}{|Q_I|_{\alpha}}\right)^{\frac{q}{p'}}.$$

We use the notations $$B_{\sigma,\alpha}(f,Q_I)=\frac{1}{|Q_I|_{\sigma,\alpha}}\int_{Q_I}f\sigma dV_\alpha$$ and
$$B_{u,\alpha,a}(g,Q_I)=\frac{1}{|Q_I|_{u,\alpha}^{1-\frac{a}{2+\alpha}}}\int_{Q_I}gu dV_\alpha.$$
We obtain

\begin{eqnarray*}
\Pi &:=& \langle Q_{\alpha,a}^{\beta}(\sigma f),gu \rangle_\alpha\\ &=& \sum_{I\in \mathcal {D}^\beta}|I|^a\langle \sigma f,{\bf 1}_{Q_I} \rangle_\alpha\langle u g,{\bf 1}_{Q_I} \rangle_\alpha |Q_I|^{-1-\frac{\alpha}{2}}\\ &=& \sum_{I\in \mathcal {D}^\beta}\langle \sigma f,{\bf 1}_{Q_I} \rangle_\alpha\langle u g,{\bf 1}_{Q_I} \rangle_\alpha |Q_I|_\alpha^{-1+\frac{a}{2+\alpha}}\\
&=& \sum_{I\in \mathcal {D}^\beta}B_{\sigma,\alpha}(f,Q_I)B_{u,\alpha,a}(g,Q_I)\frac{|Q_I|_{\sigma,\alpha}|Q_I|_{u,\alpha}^{1-\frac{a}{2+\alpha}}}{|Q_I|_{\alpha}^{1-\frac{a}{2+\alpha}}}\\ &\le&
[\omega]_{B_{p,q,\alpha}}^{1-\frac{a}{2+\alpha}}\sum_{I\in \mathcal{D}^\beta}B_{\sigma,\alpha}(f,Q_I)B_{u,\alpha,a}(g,Q_I)\frac{|Q_I|_{\sigma,\alpha}|Q_I|_{u,\alpha}^{1-\frac{a}{2+\alpha}}}{|Q_I|_{\alpha}^{1-\frac{a}{2+\alpha}}}\times\\ && \left(\frac{|Q_I|_\alpha}{|Q_I|_{u,\alpha}}\right)^{1-\frac{a}{2+\alpha}}\left(\frac{|Q_I|_\alpha}{|Q_I|_{\sigma,\alpha}}\right)^{\frac{q}{p'}(1-\frac{a}{2+\alpha})}\\ &=& [\omega]_{B_{p,q,\alpha}}^{1-\frac{a}{2+\alpha}}\sum_{I\in \mathcal{D}^\beta}B_{\sigma,\alpha}(f,Q_I)B_{u,\alpha,a}(g,Q_I)|Q_I|_{\alpha}^{\frac{q}{p'}(1-\frac{a}{2+\alpha})}|Q_I|_{\sigma,\alpha}^{1-\frac{q}{p'}(1-\frac{a}{2+\alpha})}.
\end{eqnarray*}
 As $T_I\subset Q_I$ and $1-\frac{q}{p'}(1-\frac{a}{2+\alpha})\le 0$, $|Q_I|_{\sigma,\alpha}^{1-\frac{q}{p'}(1-\frac{a}{2+\alpha})}\lesssim |T_I|_{\sigma,\alpha}^{1-\frac{q}{p'}(1-\frac{a}{2+\alpha})}$. We remark that $u^{\frac{1}{(1-\frac{a}{2+\alpha})q}}\sigma^{\frac{1}{(1-\frac{a}{2+\alpha})p'}}=1$ and $\frac{1}{(1-\frac{a}{2+\alpha})q}+\frac{1}{(1-\frac{a}{2+\alpha})p'}=1$. It follows that 
 \Beas
 |T_I|_\alpha &=& \int_{T_I}u^{\frac{1}{(1-\frac{a}{2+\alpha})q}}\sigma^{\frac{1}{(1-\frac{a}{2+\alpha})p'}}dV_\alpha\\ &\leq & |T_I|_{u,\alpha}^{\frac{1}{(1-\frac{a}{2+\alpha})q}}|T_I|_{\sigma,\alpha}^{\frac{1}{(1-\frac{a}{2+\alpha})p'}}.
 \Eeas
 Hence as $|Q_I|_\alpha\backsimeq |T_I|_\alpha$, we deduce that
 $$|Q_I|_\alpha\lesssim |T_I|_{u,\alpha}^{\frac{1}{(1-\frac{a}{2+\alpha})q}}|T_I|_{\sigma,\alpha}^{\frac{1}{(1-\frac{a}{2+\alpha})p'}}.$$
It follows that
\Beas
\Pi &\lesssim& [\omega]_{B_{p,q,\alpha}}^{1-\frac{a}{2+\alpha}}\sum_{I\in \mathcal{D}^\beta}B_{\sigma,\alpha}(f,Q_I)B_{u,\alpha,a}(g,Q_I)|Q_I|_{\alpha}^{\frac{q}{p'}(1-\frac{a}{2+\alpha})}|Q_I|_{\sigma,\alpha}^{1-\frac{q}{p'}(1-\frac{a}{2+\alpha})}\\ &\lesssim& [\omega]_{B_{p,q,\alpha}}^{1-\frac{a}{2+\alpha}}\sum_{I\in \mathcal{D}^\beta}B_{\sigma,\alpha}(f,Q_I)B_{u,\alpha,a}(g,Q_I)|T_I|_{u,\alpha}^{\frac{1}{p'}}|T_I|_{\sigma,\alpha}^{\frac{1}{p}}\\ &\le& [\omega]_{B_{p,q,\alpha}}^{1-\frac{a}{2+\alpha}}\left(\sum_{I\in \mathcal {D}^\beta}|T_I|_{\sigma,\alpha}\left(B_{\sigma,\alpha}(f,Q_I)\right)^p\right)^{\frac{1}{p}}\left(\sum_{I\in \mathcal {D}^\beta}|T_I|_{\omega,\alpha}\left(B_{u,\alpha,a}(g,Q_I)\right)^{p'}\right)^{\frac{1}{p'}}\\ &=& [\omega]_{B_{p,q,\alpha}}^{1-\frac{a}{2+\alpha}}\left(\sum_{I\in \mathcal {D}^\beta}\int_{T_I}\left(B_{\sigma,\alpha}(f,Q_I)\right)^p\sigma dV_\alpha\right)^{\frac{1}{p}}\left(\sum_{I\in \mathcal {D}^\beta}\int_{T_I}\left(B_{u,\alpha,a}(g,Q_I)\right)^{p'}\omega dV_\alpha\right)^{\frac{1}{p'}}\\ &\le& [\omega]_{B_{p,q,\alpha}}^{1-\frac{a}{2+\alpha}}\|\mathcal{M}_{\sigma,\alpha}^\beta f\|_{L^p(\sigma dV_\alpha)}\|\mathcal{M}_{u,\alpha,a}^\beta g\|_{L^{p'}(u dV_\alpha)}\\ &\le& C_{p,q,\alpha}[\omega]_{B_{p,q,\alpha}}^{1-\frac{a}{2+\alpha}}\| f\|_{L^p(\sigma dV_\alpha)}\|g\|_{L^{q'}(u dV_\alpha)}.
\Eeas

For the case $\frac{q}{p'}\leq 1-\frac{a}{2+\alpha}$, we use the previous and duality. 
%We note that if $p$ and $q$ are such that $\frac{q}{p'}\leq 1-\frac{a}{2+\alpha}$, then $p'$ and $q'$ satisfy $\frac{p'}{q}\leq 1-\frac{a}{2+\alpha}$. 
We observe that $Q_{\alpha,a}^\beta$ is self-adjoint with respect to the duality pairing $\langle \cdot,\cdot\rangle_\alpha$. Hence
\Beas \|Q_{\alpha,a}^\beta\|_{L^p(\omega^p dV_\alpha)\rightarrow L^q(\omega^q dV_\alpha)} &=& \|Q_{\alpha,a}^\beta\|_{L^{q'}(\omega^{-q'} dV_\alpha)\rightarrow L^{p'}(\omega^{-p'} dV_\alpha)}\\ &\leq& C_{p,q,\alpha}[\omega^{-1}]_{B_{q',p',\alpha}}^{1-\frac{a}{2+\alpha}}\\ &\leq& C_{p,q,\alpha}[\omega]_{B_{p,q,\alpha}}^{\frac{p'}{q}(1-\frac{a}{2+\alpha})}.
\Eeas
For the proof of the necessary part, we observe that $\mathcal{M}_{\alpha,a}f(z)\leq S_{\alpha,a}|f|(z)$ for any $z\in \mathcal{H} $. Put $f=\omega^{-p'}1_{Q_I}$ for some fixed interval $I\subset \mathbb{R}$. We recall the notations $u=\omega^q$ and $\sigma=\omega^{-p'}$. Observe that $$\mathcal{M}_{\alpha,a}f(z)\geq \frac{|Q_I|_{\sigma,\alpha}}{|Q_I|_{\alpha}^{1-\frac{a}{2+\alpha}}}.$$ Hence from the above observations and the boundedness of $S_{\alpha,a}$, we obtain that there is a positive constant $C>0$ such that
\Beas
C|Q_I|_{\sigma,\alpha}^{1/p}= C\left(\int_{Q_I}\omega^{-pp'+p}dV_\alpha\right)^{1/p} &=& C\|f\|_{L^p(\omega^p dV_\alpha)}\\ &\ge& \|\mathcal{M}_{\alpha,a}f\|_{L^q( udV_\alpha)}\\ &\ge& \frac{|Q_I|_{\sigma,\alpha}|Q_I|_{u,\alpha}^{1/q}}{|Q_I|_{\alpha}^{1-\frac{a}{2+\alpha}}}.
\Eeas
Thus 
\Beas [\omega]_{B_{p,q,\alpha}} &=& \sup_{I}\left(\frac{|Q_I|_{u,\alpha}}{|Q_I|_{\alpha}}\right)\left(\frac{|Q_I|_{\sigma,\alpha}}{|Q_I|_{\alpha}}\right)^{q/p'}\\ &=& \sup_{I}\frac{|Q_I|_{\sigma,\alpha}^{q/p'}|Q_I|_{u,\alpha}}{|Q_I|_{\alpha}^{q(1-\frac{a}{2+\alpha})}}\le C.
\Eeas
%necessity of the condition $\omega\in B_{p,q}(\mathcal{H})$ follows by testing the boundedness of $M_{\omega_\alpha,a}f(z)$ on the function ${\bf 1}_{Q_I}$ for the Carleson box $Q_I$.
The proof is complete.
\end{proof}
\section{Example for sharpness}
We exhibit here an example of function and weight that show that our estimates are sharp. As this is a routine in this setting, we will go straight to the essential. We recall that $0\le a<2+\alpha$. Fix $0<\delta<1$. We assume that $\frac{p'}{q}\geq 1$. We consider $\omega(z)=|z|^{\frac{(2+\alpha-\delta)}{p'}}$. One easily checks that $\omega\in B_{p,q,\alpha}(\mathcal{H})$ and that $[\omega]_{B_{p,q,\alpha}}\simeq \delta^{-\frac{q}{p'}}$. We also consider the function $f(z)=|z|^{\delta-(2+\alpha)}{\bf 1}_{\{z\in \mathcal{H}:|z|\leq 1\}}(z)$. We obtain that $$\|f\|_{L^p(\mathcal{H}, \omega^pdV_\alpha)}\simeq \delta^{-1/p}.$$
\vskip .3cm
We recall the following facts from \cite{PR}: for each $\alpha>-1$, there exists a constant $M_\alpha>0$ such that for any $z\in \mathcal{H}$, $|z|\geq M_\alpha$, we have for any  $w_1,w_2\in  \{w\in \mathcal{H}: |w|\leq 1\}$, $$\arg((z-\bar{w}_1),(z-\bar{w}_2))\leq 2\arcsin\frac{1}{M_\alpha}.$$
Also, $|z-\bar{w}|^{2+\alpha}\leq 2^{2+\alpha}|z|^{2+\alpha}$, for $w\in \{w\in \mathcal{H}: |w|\leq 1\}$.
\vskip .3cm
It follows from the above facts that for any $z\in \mathcal{H}$ with $|z|\geq M_\alpha$,
\Beas
S_{\alpha,a}f(z=x+iy) &\geq& y^a\int_{\{w\in \mathcal{H}: |w|\leq 1\}}\frac{f(w)}{|z-\bar{w}|^{2+\alpha}}dV_\alpha(w)\\ &\geq& C_\alpha y^a|z|^{-(2+\alpha)}\int_{\{w\in \mathcal{H}: |w|\leq 1\}}|w|^{\delta-(2+\alpha)}dV_\alpha(w)\\ &\geq& \frac{C_{\alpha}}{\delta} y^a|z|^{-(2+\alpha)}.
\Eeas
Hence
\Beas
\|S_{\alpha,a}f\|_{L^q(\mathcal{H}, \omega^qdV_\alpha)}^q &\geq& C_{\alpha}\delta^{-q}\int_{|z|\geq M_\alpha}|z|^{-(2+\alpha)q+(2+\alpha-\delta)\frac{q}{p'}}dV_{\alpha+aq}(z)\\ &\geq& C_{p,q,\alpha}\delta^{-q-1}.
\Eeas
That is 
$$\|S_{\alpha,a}f\|_{L^q(\mathcal{H}, \omega^qdV_\alpha)}\gtrsim \delta^{-\frac{1}{q}-1}=\delta^{-(\frac{1}{p'}+\frac{1}{q})}\delta^{-\frac{1}{p}}\simeq [\omega]_{B_{p,q,\alpha}}^{(1-\frac{a}{2+\alpha})\frac{p'}{q}}\|f\|_{L^p(\mathcal{H}, \omega^pdV_\alpha)}.$$
\section{A sibling of the operator $S_{\alpha,a}$}
The following operator also generalizes the positive Bergman operator:
$$T_{\alpha,a}f(z)=\int_{\mathcal {H}}\frac{f(z)}{|z-\overline{w}|^{2+\alpha-a}}dV_{\alpha}(z)$$
where $0\leq a<2+\alpha$. We observe that $$S_{\alpha,a}f\le T_{\alpha,a}f\,\,\,\textrm{for all}\,\,\,f\geq 0.$$
The proof of the following result follows as above.
\begin{theorem}\label{thm:Taalphapq}
Let $\alpha>-1$, and $0\leq a<2+\alpha$. Let $1<p\leq q<\infty$, so that $\frac{1}{p}-\frac{1}{q}=\frac{a}{2+\alpha}$ and $\min\{\frac{p'}{q},\frac{q}{p'}\}\leq 1-\frac{a}{2+\alpha}$. Then $T_{\alpha,a}$ is bounded from $L^p(\mathcal{H}, \omega^pdV_\alpha)$ into $L^q(\mathcal{H}, \omega^qdV_{\alpha})$ if and only if $\omega\in B_{p,q,\alpha}(\mathcal{H})$. Moreover,
$$\|T_{\alpha,a}\|_{L^p(\mathcal{H},\omega^pdV_\alpha)\rightarrow L^q(\mathcal{H},\omega^qdV_\alpha)}\le C_{p,q,\alpha}[\omega]_{B_{p,q,\alpha}}^{(1-\frac{a}{2+\alpha})\max\{1,\frac{p'}{q}\}}.$$
Moreover, the power of $[\omega]_{B_{p,q,\alpha}}$ in the above inequality is sharp.
\end{theorem}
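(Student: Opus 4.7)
The plan is to mirror the proof of Theorem \ref{thm:Sbetapq} almost verbatim; the only genuinely new input is a dyadic pointwise majorization of $T_{\alpha,a}$ by the very same model operators $Q_{\alpha,a}^\beta$ defined in \eqref{eq:discretoper}. Namely, I will first establish the analogue of Proposition \ref{prop:compareP+Pbeta}: there is a constant $C>0$ such that for every nonnegative $f\in L^1_{loc}(\mathcal{H}, dV_\alpha)$ and every $z\in\mathcal H$,
\Be
T_{\alpha,a}f(z)\;\le\;C\sum_{\beta\in\{0,1/3\}}Q_{\alpha,a}^\beta f(z).
\Ee
The kernel of $T_{\alpha,a}$ is $|z-\bar w|^{-(2+\alpha-a)}$, and its box-type replacement $\sum_{I\in\mathcal D^\beta}|I|^{-(2+\alpha-a)}\mathbf{1}_{Q_I}(z)\mathbf{1}_{Q_I}(w)$ rewrites as $\sum_I |I|^a\,|I|^{-(2+\alpha)}\mathbf{1}_{Q_I}(z)\mathbf{1}_{Q_I}(w)$, so that after integration against $f\,dV_\alpha$ in the $w$-variable one recovers exactly $Q_{\alpha,a}^\beta f$. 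The two-grids covering lemma of Pott and Reguera \cite{PR} only relies on the geometric fact that for every pair $z,w\in\mathcal H$ there is some $I\in\mathcal D^0\cup\mathcal D^{1/3}$ with $z,w\in Q_I$ and $|I|\lesssim|z-\bar w|$, which is independent of the exponent and so carries over without change.

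Given this domination, the sufficient direction of Theorem \ref{thm:Taalphapq} together with its quantitative bound reduces to the $L^p(\omega^p dV_\alpha)\to L^q(\omega^q dV_\alpha)$ boundedness of $Q_{\alpha,a}^\beta$ with the norm $C_{p,q,\alpha}[\omega]_{B_{p,q,\alpha}}^{(1-\frac{a}{2+\alpha})\max\{1,\frac{p'}{q}\}}$, which is precisely what was proved inside Theorem \ref{thm:Sbetapq}, by the direct testing/maximal function argument when $p'/q\le 1-a/(2+\alpha)$ and by duality (using the self-adjointness of $Q_{\alpha,a}^\beta$) when $q/p'\le 1-a/(2+\alpha)$. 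For the necessary direction I will exploit the pointwise inequality $S_{\alpha,a}f\le T_{\alpha,a}f$ for $f\ge 0$, which is immediate from $\Im z\le|z-\bar w|$ for $w\in\mathcal H$ and $a\ge 0$; boundedness of $T_{\alpha,a}$ therefore implies boundedness of $S_{\alpha,a}$, and the testing argument from the proof of Theorem \ref{thm:Sbetapq} against $\omega^{-p'}\mathbf{1}_{Q_I}$, combined with the minoration of $\mathcal M_{\alpha,a}$ by $S_{\alpha,a}$, yields $\omega\in B_{p,q,\alpha}(\mathcal H)$.

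For sharpness I will reuse the test pair from Section 3, namely $f(z)=|z|^{\delta-(2+\alpha)}\mathbf{1}_{\{|z|\le 1\}}(z)$ and $\omega(z)=|z|^{(2+\alpha-\delta)/p'}$ with $0<\delta<1$ and $p'/q\ge 1$. For $|z|\ge M_\alpha$ and $|w|\le 1$, the bound $|z-\bar w|^{2+\alpha-a}\lesssim|z|^{2+\alpha-a}$ gives $T_{\alpha,a}f(z)\gtrsim \delta^{-1}|z|^{-(2+\alpha-a)}$, and computing $\|T_{\alpha,a}f\|_{L^q(\omega^q dV_\alpha)}$ in polar coordinates reproduces the same boundary exponent calculation as in Section 3: the relation $\frac{1}{p}-\frac{1}{q}=\frac{a}{2+\alpha}$ places the radial integrand at the critical power $r^{-1-\delta q/p'}$, producing a $\delta^{-1}$ blow-up, and combined with $[\omega]_{B_{p,q,\alpha}}\simeq\delta^{-q/p'}$ and $\|f\|_{L^p(\omega^p dV_\alpha)}\simeq\delta^{-1/p}$ this pins down the sharp exponent $(1-\tfrac{a}{2+\alpha})\tfrac{p'}{q}$. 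The only point that deserves attention, rather than a true obstacle, is the verification of the two-grids covering at the exponent $2+\alpha-a$; since the covering is a purely geometric property of the quadrants $Q_I$, this is essentially automatic.
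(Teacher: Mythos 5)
Your proposal is correct and follows exactly the route the paper intends: the paper gives no details for Theorem \ref{thm:Taalphapq} beyond the remark that it ``follows as above,'' and your dyadic domination of $T_{\alpha,a}$ by the same model operators $Q_{\alpha,a}^\beta$ (after rewriting $|I|^{-(2+\alpha-a)}=|I|^a|I|^{-(2+\alpha)}$), together with the reuse of the necessity and sharpness arguments from Theorem \ref{thm:Sbetapq}, is precisely the missing content. You also correctly note that the pointwise bound $S_{\alpha,a}f\le T_{\alpha,a}f$ only serves the necessity and sharpness directions, so the separate kernel domination is indeed required for sufficiency.
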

One can see the operators $S_{\alpha,a}$ and $T_{\alpha,a}$ as analogues of fractional operators of real harmonic analysis. Our results can then be viewed as analogues of the ones in \cite{Laceyetal, Moen}.
\section{Proof of Theorem \ref{thm:Bergpq}}
We start by recalling the following easy fact.
\begin{lemma}\label{lem:integkernel} Let $\alpha$ be real. Then
 the function
$f(z)=\left(\frac{z+it}{i}\right)^{-\alpha}$, with $t>0$, belongs to
$L^{p}(\mathcal{H}, dV_\nu)$, if and only if $\nu>-1$ and $\alpha >
\frac{\nu+2}{p}$. In this
case,$$||f||_{p,\nu}^p=C_{\alpha,p,q}t^{-p\alpha+\nu+2}.$$
%where $C_{\alpha,p,q}=[B(\frac{1}{2}, \frac{p\alpha-1}{2})]B(\nu+1, p\alpha-\nu-2)$.
\end{lemma}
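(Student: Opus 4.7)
The plan is to reduce the claim to a direct two-variable integral computation and extract the homogeneity in $t$ by a scaling substitution.

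First, I would observe that $\left|\frac{z+it}{i}\right| = |z+it|$, so writing $z = x+iy$ gives
\[
|f(z)|^p = |x+i(y+t)|^{-p\alpha} = (x^2+(y+t)^2)^{-p\alpha/2}.
\]
Thus
\[
\|f\|_{p,\nu}^p = \int_0^\infty y^\nu \left(\int_{-\infty}^\infty (x^2+(y+t)^2)^{-p\alpha/2}\,dx\right) dy.
\]

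Next, I would evaluate the inner integral by the substitution $x = (y+t)u$, which produces the factor $(y+t)^{1-p\alpha}$ times $\int_{\mathbb{R}} (1+u^2)^{-p\alpha/2}\,du$. This inner integral converges if and only if $p\alpha > 1$, yielding a finite constant $C_1 = C_1(\alpha,p)$. So the finiteness of $\|f\|_{p,\nu}^p$ reduces to the convergence of
\[
C_1 \int_0^\infty (y+t)^{1-p\alpha}\, y^\nu\, dy.
\]
Here the behavior near $y = 0$ forces $\nu > -1$ and the behavior at $y = \infty$ forces $1-p\alpha+\nu < -1$, i.e.\ $\alpha > (\nu+2)/p$. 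Note that these two conditions together imply $p\alpha > \nu + 2 > 1$, so the requirement used in the first step is automatic, and we obtain simultaneously the necessity and sufficiency of both listed conditions.

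Finally, under these conditions, the substitution $y = ts$ gives
\[
\int_0^\infty (y+t)^{1-p\alpha}\, y^\nu\, dy = t^{\,\nu+2-p\alpha} \int_0^\infty (1+s)^{1-p\alpha}\, s^\nu\, ds,
\]
and the last integral is a Beta function $B(\nu+1, p\alpha-\nu-2)$, finite under our conditions. Combining with $C_1$ yields the claimed identity $\|f\|_{p,\nu}^p = C_{\alpha,p,\nu}\, t^{-p\alpha+\nu+2}$. There is no real obstacle here beyond bookkeeping: the only subtle point is verifying that the two convergence conditions obtained from the $y$-integral automatically imply $p\alpha > 1$, so that the $x$-integration step is legitimate in exactly the stated range.
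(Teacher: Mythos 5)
Your proof is correct and is exactly the standard computation this lemma rests on; the paper itself states the lemma without proof (as a "recalled easy fact"), so there is nothing to diverge from. The one genuinely delicate point — that the $x$-integration requires $p\alpha>1$, which must be checked to follow from $\nu>-1$ and $\alpha>(\nu+2)/p$ for the "if" direction, and handled via Tonelli for the "only if" direction — is addressed correctly in your write-up.
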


We also recall the following off-diagonal Schur-type test.
\begin{lemma}[G. O. Okokiolu \cite{Oki}]\label{lem:okikiolu}
Let $p,r,q$ be positive numbers such that $1<p\le r$ and $\frac{1}{p} + \frac{1}{q} = 1.$ Let $K(x,y)$ be a complex-valued function measurable on $X\times Y$ and suppose there exist $0<t\le 1$, measurable functions $\varphi_1:X\rightarrow (0,\infty),\quad\varphi_2:Y\rightarrow (0,\infty)$ and nonnegative constants $M_1,M_2$ such that
\begin{eqnarray}
\label{ee1} 
\int_X\left|K(x,y)\right|^{tq}\varphi_1^{q}(y)\mathrm{d}\mu(y) &\le & M_1^{q}\varphi_2^{q}(x)\qquad\mbox{a.e on}\quad Y\quad\mbox{and}\\ 
\label{ee2}
\int_Y\left|K(x,y)\right|^{(1-t)r}\varphi_2^r(x)\mathrm{d}\nu(x) &\le  &M_2^r\varphi_1^r(y)\qquad\mbox{a.e on}\quad X.
\end{eqnarray} 
If $T$ is given by $$Tf(x)=\int_X\!\!\!f(y)K(x,y)\mathrm{d}\mu(y)$$ where $f\in L^p(X,\mathrm{d}\mu),$ then $T:L^p(X,\mathrm{d}\mu)\longrightarrow L^r(Y,\mathrm{d}\nu)$ is bounded and for each $f\in L^p(X,\mathrm{d}\mu)$, $$\left\|Tf\right\|_{L^r(Y,\mathrm{d}\nu)}\le M_1M_2\|f\|_{L^p(X,\mathrm{d}\mu)}.$$
\end{lemma}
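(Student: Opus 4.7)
The plan is to deduce the norm bound from hypotheses (\ref{ee1}) and (\ref{ee2}) by a two-step Hölder-plus-Minkowski scheme, which is the standard route to Schur-type tests built on two-sided kernel moment conditions. The two hypotheses are tailored exactly for this scheme: the first produces a pointwise Hölder estimate on $|Tf(x)|$, the second governs the subsequent passage from exponent $p$ to exponent $r$.

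For the first step, I would fix $x$, split the kernel as $|K(x,y)|=|K(x,y)|^{t}|K(x,y)|^{1-t}$, and insert $\varphi_1$ so that the integrand of $Tf(x)$ becomes the product $\bigl(|K(x,y)|^{t}\varphi_1(y)\bigr)\cdot\bigl(|K(x,y)|^{1-t}\varphi_1(y)^{-1}|f(y)|\bigr)$. Hölder's inequality with the conjugate pair $(q,p)$, followed immediately by hypothesis (\ref{ee1}), yields a pointwise bound of the shape
\[
|Tf(x)|^{p}\le M_1^{p}\varphi_2(x)^{p}\int|K(x,y)|^{(1-t)p}\varphi_1(y)^{-p}|f(y)|^{p}\,d\mu(y).
\]

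For the second step, since $p\le r$ the exponent $r/p$ is at least one, so I may take the $L^{r/p}(\nu)$-norm of the previous pointwise inequality and pull the $y$-integration through Minkowski's integral inequality. The inner $x$-integral that then emerges is exactly the quantity controlled by hypothesis (\ref{ee2}); after invoking it and raising to the $p/r$-th power, the factor $\varphi_1(y)^{p}$ cancels the $\varphi_1(y)^{-p}$ left over from step one, and the remaining integrand is just $M_2^{p}|f(y)|^{p}$. A final $p$-th root produces $\|Tf\|_{L^{r}(\nu)}\le M_1M_2\|f\|_{L^{p}(\mu)}$.

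I do not expect a genuine obstacle: the only place where $p\le r$ is actually used is the application of Minkowski's integral inequality, which requires the outer exponent $r/p\ge 1$, and the only place where $1/p+1/q=1$ is used is the first Hölder step. All the cleverness of the lemma is packed into its statement, namely the simultaneous existence of a splitting parameter $t\in(0,1]$ and a pair of test functions $(\varphi_1,\varphi_2)$ verifying (\ref{ee1})--(\ref{ee2}); once these are available the argument is purely mechanical Hölder--Minkowski bookkeeping.
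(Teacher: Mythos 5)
Your argument is correct: the H\"older step with the conjugate pair $(q,p)$ uses exactly hypothesis \eqref{ee1}, and the Minkowski integral inequality with outer exponent $r/p\ge 1$ feeds precisely into hypothesis \eqref{ee2}, after which the $\varphi_1^{\pm p}$ factors cancel and the bound $M_1M_2\|f\|_{L^p(\mu)}$ falls out. The paper itself gives no proof of this lemma (it is quoted from Okikiolu's paper \cite{Oki}), and your H\"older--Minkowski scheme is the standard argument for such off-diagonal Schur tests, so there is nothing to add.
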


\begin{proof}[Proof of Theorem \ref{thm:Bergpq}]
As said above the proof of the necessary part follows as in \cite{BanSeh}. Hence we only present the proof of the sufficiency.
\vskip .3cm
We recall that $b = a+1-\frac{\alpha+2}{p} + \frac{\beta+2}{q}$.  Let us put $\omega = a-b-\alpha-1$ and observe that $$\omega = a-b-\alpha-1 = -\left(\frac{\alpha+2}{p'}+\frac{\beta+2}{q} \right)<0.$$
Note that $\alpha+1<p(a+1)$ is equivalent to $(a-\alpha)+\frac{\alpha+1}{p'}>0.$ As $\omega<0,$ we obtain $(a-\alpha)\omega+\frac{\alpha+1}{p'}\omega <0,$ which is the same as $$-\frac{a -\alpha}{p'}(\alpha+2)-\frac{a-\alpha}{q}(\beta+2)+\frac{\alpha+1}{p'}\omega<0$$ or 
\begin{eqnarray}
\label{e1}
\frac{\alpha+1}{p'}\omega - \frac{a-\alpha}{p'}(\alpha+2)<\frac{a-\alpha}{q}(\beta+2).
\end{eqnarray}
We also have that $0<\frac{\beta+1}{q}$. 
From \eqref{e1} and the last observation, we have that one can find two numbers $r$ and $s$ with $r-s>0$, such that
\begin{eqnarray}
\label{e3}
\frac{\alpha+1}{p'}\omega - \frac{a-\alpha}{p'}(\alpha+2)<\omega s+(a-\alpha)(r-s)<\frac{a-\alpha}{q}(\beta+2)
\end{eqnarray}
and
\begin{eqnarray}
\label{e4}
0<r<\frac{\beta+1}{q}.
\end{eqnarray}
\eqref{e3} is equivalent to the inequality
%\begin{eqnarray}
%\begin{align}
\begin{equation}\label{e5}
-\frac{a-\alpha}{\omega}\left[-\frac{\beta+2}{q}+r-s\right] < s < \frac{\alpha+1}{p'}+\frac{a-\alpha}{\omega}\left[-\frac{\alpha+2}{p'}+s-r\right].
\end{equation}
%\end{align}
%\end{eqnarray}

Let $$t:=\frac{-\frac{\alpha+2}{p'}+s-r}{\omega}$$ so that $$1-t=\frac{r-s-\frac{\beta+2}{q}}{\omega}.$$ Using the definition of $t$, \eqref{e5} becomes
\begin{eqnarray}
\label{e7}
-(a-\alpha)(1-t)<s<\frac{\alpha+1}{p'}+(a-\alpha)t.
\end{eqnarray}

We now observe that the operator $T^+$ can be represented as
$$T^{+}f(z)=\int_{\mathcal{H}}f(w)K(z,w) dV_\alpha(w).$$
where $K(z,w)=\frac{(\Im w)^{a-\alpha}}{|z-\bar{w}|^{1+b}}$. Let us define  $$\varphi_1(w) = (\Im w)^{-s}\,\,\, \textrm{and}\,\,\,\varphi_2(w) = (\Im w)^{-r}.$$ Applying Okikiolu's test to $T^+$ we obtain at the first step 
\begin{eqnarray*}
\int_{\mathcal{H}}\!\!\!K(z,w)^{tp'}\varphi_1^{p'}(w)(\Im w)^\alpha \mathrm{d}V(w)& = &\int_{\mathcal{H}}\!\frac{v^{(a-\alpha)tp'}v^{-sp'+\alpha}}{|z-\overline{w}|^{t(1+b) p'}}\mathrm{d}V(w).
\end{eqnarray*}
From the right inequality in \eqref{e7} we have $a+1+(\beta-a)tp'-sp'>0$. Using the definition of $\omega, t$, we obtain
\begin{eqnarray*}
t(1+b) p' + sp' - (a-\alpha)tp'-\alpha-2 & = & (1+b-a+\alpha)tp'+sp'-\alpha-2\\
& = & \left(\alpha+2-\frac{\alpha+2}{p}+\frac{\beta+2}{q}\right)tp'+sp'-\alpha-2\\
& = & -\omega tp'+sp'-\alpha-2\\
& = & \left(\frac{\alpha+2}{p'}+r-s\right)p'+sp'-\alpha-2\\
& = & rp'>0.
\end{eqnarray*}
Hence we obtain from the above observations and Lemma \ref{lem:integkernel} that
$$\int_{\mathcal{H}}\!\!\!K(z,w)^{tp'}\varphi_1^{p'}(w)(\Im w)^\alpha \mathrm{d}V(w)=Cy^{-rp'}=C\varphi_2(z)^{p'}.$$
On the other hand, we have 
\begin{eqnarray*}
\int_{\mathcal{H}}\!\!\!K(z,w)^{(1-t)q}\varphi_2^{q}(z)(\Im z)^\beta \mathrm{d}V(z)& = &\int_{\mathcal{H}}\!\frac{(\Im w)^{(a-\alpha)(1-t)q}(\Im z)^{-rq'+\beta}}{|z-\overline{w}|^{(1-t)(1+b)q}}\mathrm{d}V(z)\\ &=& (\Im w)^{(a-\alpha)(1-t)q}\int_{\mathcal{H}}\!\frac{(\Im z)^{-rq+\beta}}{|z-\overline{w}|^{(1-t)(1+b)q}}\mathrm{d}V(z).
\end{eqnarray*}
From the inequality \eqref{e4}, we have $-rq+\beta+1>0$. From the definition of $\omega$ and $1-t,$ and the first inequality in \eqref{e7}, we obtain
\begin{eqnarray*}
(1+b)(1-t)q+rq-\beta-2 & = & (a-\alpha+\frac{\alpha+2}{p'}+\frac{\beta+2}{q})(1-t)q+rq-\beta-2\\
& = & (a-\alpha-\omega)(1-t)q+rq-\beta-2\\
& =& (a-\alpha)(1-t)q-\omega(1-t)q+rq-\beta-2\\
& = & (a-\alpha)(1-t)q-(\frac{\beta+2}{q}+s-r)q+rq-\beta-2\\
& = & (a-\alpha)(1-t)q+sq\\
& = & q[(a-\alpha)(1-t)+s]>0.
\end{eqnarray*}
Hence we obtain from the above observations and Lemma \ref{lem:integkernel} that
$$\int_{\mathcal{H}}\!\!\!K(z,w)^{(1-t)q}\varphi_2^{q}(z)(\Im z)^\beta \mathrm{d}V(w)=C(\Im w)^{-sq}=C\varphi_1(w)^{q}.$$
The proof is complete.

\end{proof}
%\bibliography{mybibfile}
\bibliographystyle{elsarticle-num}

\end{document}